\newtheorem{theorem}{Theorem}
\newtheorem{conjecture}[theorem]{Conjecture}
\theoremstyle{definition}
\newtheorem{definition}{Definition}
\title{The (Symbolic and  Numeric) Computational Challenges of Counting 0-1 balanced matrices}
\author{Robert Dougherty-Bliss \and Christoph Koutschan \and Natalya Ter-Saakov \and Doron Zeilberger}
\date{\today}
\begin{document}

\maketitle

\centerline{\it Dedicated to our hero, Neil J.~A. Sloane (b.~Oct. 10, 1939), on
his 85th birthday.}

\begin{abstract}
    \noindent A chessboard has the property that every row and every column has
    as many white squares as black squares. In this mostly methodological note,
    we address the problem of counting such rectangular arrays with a fixed
    (numeric) number of rows, but an arbitrary (symbolic) number of columns. We
    first address the ``vanilla" problem where there are no restrictions, and
    then go on to discuss the still-more-challenging problem of counting such
    binary arrays that are not permitted to contain a specified (finite) set of
    horizontal patterns, and a specified set of vertical patterns.  While we
    can rigorously prove that each such sequence satisfies some linear
    recurrence equation with polynomial coefficients, actually finding these
    recurrences poses major  {\it symbolic}-computational challenges, that we
    can only meet in some small cases. In fact, just generating as many as
    possible terms of these sequences is a big {\it numeric}-computational
    challenge. This was tackled by computer whiz Ron H. Hardin, who contributed
    several such sequences, and computed quite a few terms of each. We extend
    Hardin's sequences quite considerably. We also talk about the much easier
    problem of counting such restricted arrays without balance conditions.
\end{abstract}

\section*{Preface: How it all started} 

Like many combinatorial problems \cite{Condorcet,Derange}, ours was inspired by
real life. A few weeks ago, the New York Times magazine started publishing a new
kind of logic puzzle that they call {\it Not Alone}, created by Presanna
Seshadri. You are given a $6 \times 6$ (or $8 \times 8$) array of boxes with
most of them empty, but a few of them are filled with either a solid circle,
that we will denote by $1$, or an empty circle, that we will denote by $0$. The
solver has to, presumably using logic and human cleverness, fill-in the empty
boxes such that the following conditions are met:

\begin{itemize}
    \item Every row and every column must have as many zeroes as ones (i.e.,
        they each must contain $3$ zeroes and $3$ ones in the $6 \times 6$
        case).

    \item It is forbidden that on any row, and on any column, a {\bf single}
        zero will be `all alone' between two ones and
        that a {\bf single} one will be all alone between two zeroes. In other
        words the patterns $010$ and $101$ are forbidden both horizontally and
        vertically.
\end{itemize}

\paragraph{Three Natural Enumeration Problems}
Being {\it enumerators}, the following three questions immediately came to mind.

\begin{itemize}
    \item For a {\it fixed}, `numeric', positive integer $k$, but an {\it
        arbitrary}, `symbolic' $n$, how many $2k \times 2n$ 0-1  balanced
        matrices are there? In other words how many $2k \times 2n$ 0-1 
        matrices are there where every row has $n$ zeroes and $n$ ones and
        every column has $k$ zeroes and $k$ ones?

        Looking up some numbers from this problem leads to a family of
        sequences submitted by Ron H.~Hardin \cite{H}, who has made a number of
        interesting submissions to the OEIS. Some of these were detected by an
        automated search of the OEIS for recurrences by Kauers and Koutschan
        \cite{search}, and later proven to satisfy those recurrences by
        Dougherty-Bliss and Kauers \cite{hardinian}. We will tell a similar
        story here.

        We will show that the family of sequences $b_k(n)$, the number of $2 k
        \times 2 n$ balanced matrices, is ``D-finite'' for every fixed~$k$.
        That is, it satisfies a linear recurrence relation with polynomial
        coefficients. The approach will be to use {\it `holonomic nonsense'}
        \cite{Z,AZ,dfinite} which guarantees the existence of such a recurrence
        and also outlines a method to construct it. Using the very efficient
        implementations of \cite{K,Kfast,Kthesis} we---or rather our beloved
        computers---were able to compute these recurrences for $k = 2$ and $k =
        3$.
        
        Alas, it seems too difficult at present to determine rigorously
        what recurrence $b_4(n)$, $b_5(n)$, and so on might satisfy in~$n$.
        The deterministic algorithms just take too long to run. At least
        for $b_4(n)$, we succeeded to generate enough data to make a
        conjecture, using a recent and novel approach for guessing
        recurrences from little data~\cite{KK22}. For $b_k(n)$ with $k\geq5$
        we unfortunately were not able to compute enough data to guess
        recurrences.

    \item For an arbitrary (finite) alphabet (not just $\{0,1\}$) and arbitrary
        finite sets of forbidden
        horizontal and vertical patterns, $H$ and $V$, how many $k \times n$
        matrices are there avoiding the patterns of $H$ in the rows and the
        patterns of $V$ in the columns (with no balance conditions)? We will
        show that these sequences are much easier, both conceptually and
        computationally, since they always satisfy linear recurrence equations
        with {\bf constant} coefficients, or equivalently, their generating
        function is a {\it rational function}.

    \item Going back to the alphabet $\{0,1\}$, for a specific $k$, how many
        $2k \times 2n$ balanced  0-1
        matrices are there that also avoid a prescribed horizontal set of
        patterns $H$ and (another or the same) prescribed  vertical set of
        patterns $V$. Once again, we will show that for each such scenario, the
        enumerating sequence satisfies {\it some}  linear recurrence equation 
        with polynomial coefficients (in~$n$). Alas finding it is yet harder
        than the `vanilla' case above. Once again this leads to numeric
        challenges. In particular, it turns out that for the original New York
        Times puzzles where $k=n=3$ and $H=V=\{010,101\}$ that  number is {\bf
        exactly} $368$. It is relatively easy to actually construct the set of
        all such legal matrices, {\it once and for all}. It follows that,
        surprisingly, a  pure {\it brute-force} algorithm for solving these
        original puzzles is more efficient than using logic, as a human solver
        would. Just try out all $368$ possible answers and see which one agrees
        with the given clues. For the $8 \times 8$ puzzles that started to
        appear shortly after, we found that there are $34586$ possible answers,
        which suggests that it may be more efficient to do it the human way of
        using logic.
\end{itemize}

The Maple package {\tt NotAlone.txt}, available from

\url{https://sites.math.rutgers.edu/~zeilberg/tokhniot/NotAlone.txt}

solves and creates such puzzles. Procedure {\tt Ptor} implements the
brute-force approach that is optimal for the $6 \times 6$ case. Procedure {\tt
SolveN} does it in a way a human would tackle it.

\section*{Theorems}

In this section, that is purely {\it theoretical}, we will prove that the first
and third kind of sequences above are P-recursive (aka {\it holonomic}), in
other words are guaranteed to satisfy {\it some} linear recurrence equation
with {\bf polynomial coefficients} (see \cite{KaP} chapter 7), while any sequence
that comes from the second kind of enumeration problems belongs to the simpler
class of C-finite sequences (\cite{KaP}, chapter 4), i.e., satisfies {\it some}
linear equation with {\bf constant coefficients}.

\begin{theorem}
    Let $k$ be a specific positive integer, and let $n$ be a general positive
    integer. Let  $b_k(n)$ be the number of balanced $2k \times 2n$ 0-1
    matrices,
    i.e., binary matrices with $2k$ rows and $2n$ columns where every row has
    exactly $n$ ones (and hence exactly $n$ zeroes), and every column has
    exactly $k$ ones (and hence exactly $k$ zeroes). Then the sequence
    $\{b_k(n)\}_{n=1}^{\infty}$ is holonomic. In other words there exists a
    positive integer $L$ (the order) and polynomials in $n$, $p_i(n), 0 \leq i
    \leq L$, with $p_L(n)\neq0$ such that
    $$
    \sum_{i=0}^{L} p_i(n) b_k(n+i) =0 \quad .
    $$
\end{theorem}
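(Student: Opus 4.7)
The plan is to express $b_k(n)$ as the constant term of $F(\mathbf{y})^n$ for a fixed Laurent polynomial $F$ depending only on $k$, and then invoke the standard ``holonomic nonsense'' closure results to conclude that $b_k(n)$ is P-recursive in~$n$.

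First I would set up a column-by-column generating function. Introduce indeterminates $y_1, \ldots, y_{2k}$, one per row. A single valid column is a $2k$-bit vector with exactly $k$ ones, and it contributes the monomial $\prod_{i \in S} y_i$ where $S \subseteq \{1,\ldots,2k\}$ is the set of rows carrying a~$1$. The generating polynomial for one column is therefore the elementary symmetric polynomial $e_k(y_1, \ldots, y_{2k})$. Since the $2n$ columns are chosen independently, and the row-balance condition asks that each $y_i$ appear to the $n$-th power overall, one obtains
$$
b_k(n) \;=\; [y_1^n y_2^n \cdots y_{2k}^n] \, e_k(y_1, \ldots, y_{2k})^{2n}
\;=\; \mathrm{CT}_{\mathbf{y}} \, F(\mathbf{y})^n,
$$
where $F(\mathbf{y}) = e_k(\mathbf{y})^2 / (y_1 y_2 \cdots y_{2k})$ is a fixed Laurent polynomial.

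Next I would apply the holonomic closure machinery. The bivariate rational function $1/(1 - t\, F(\mathbf{y}))$ is D-finite in $t, y_1, \ldots, y_{2k}$, and iterated constant-term extraction with respect to $y_1, \ldots, y_{2k}$ (equivalently, a $(2k)$-fold residue along the unit torus) preserves D-finiteness -- this is essentially Lipshitz's theorem that diagonals of rational functions are D-finite, and a prototypical instance of the creative-telescoping paradigm cited in the introduction. The resulting univariate series $\sum_{n \geq 0} b_k(n)\, t^n$ is therefore D-finite in~$t$, and its Taylor coefficients are automatically P-recursive in~$n$. The requirement $p_L(n) \not\equiv 0$ comes for free from the minimality of the annihilating recurrence over $\mathbb{Q}(n)$, since $b_k(n) > 0$ for all~$n$.

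The main obstacle -- as the introduction already stresses -- is not the existence of the recurrence but its effective computation. Holonomic nonsense is a soft argument: it provides no a priori bound on the order $L$ or on the degrees of the $p_i(n)$, and the underlying creative-telescoping procedure on the annihilating ideal of $F(\mathbf{y})^n$ becomes prohibitively expensive as $k$ grows, because the number of monomials in $e_k(\mathbf{y})$ and the number of intermediate variables both blow up. That computational bottleneck is precisely what forces the authors to resort to guessing for $k=4$ and to give up on a rigorous derivation for $k \geq 5$.
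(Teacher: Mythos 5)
Your proposal is correct and follows essentially the same route as the paper: both express $b_k(n)$ as the constant term of $\bigl(e_k(x_1,\dots,x_{2k})^2/(x_1\cdots x_{2k})\bigr)^n$ and then invoke the holonomic closure of constant-term/residue extraction (the paper phrases this as a $2k$-fold contour integral handled by creative telescoping, you phrase it via Lipshitz-style diagonals of $1/(1-tF(\mathbf{y}))$, which is the same closure property in different packaging). No gaps worth flagging.
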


\begin{proof}
Let $e_k(x_1, \dots, x_n)$ be the {\bf elementary symmetric function} of degree $k$:
$$
e_k(x_1, \dots, x_n) \, = \, \sum_{1 \leq i_1 <i_2 <\dots <i_k \leq n} x_{i_1} \cdots x_{i_k} \quad .
$$
It is readily seen that $b_k(n)$ is the coefficient of $x_1^n \cdots x_{2k}^n$
in $e_k(x_1, \dots,x_{2k})^{2n}$. Indeed, each monomial of $e_k(x_1,
\dots,x_{2k})$ corresponds to a way of placing $k$ ones (and $k$ zeroes) in any
particular column, making each column balanced. $e_k(x_1, \dots,x_{2k})^{2n}$
then is the weight enumerator of all column-balanced $2k$ by $2n$ 0-1
matrices. The coefficient of $x_1^n \cdots x_{2k}^n$ collects those that are
also row-balanced.
Hence
$$
b_k(n)= {\rm Coeff}_{x_1^0 \cdots x_{2k}^0} \left ( \frac{e_k(x_1, \dots, x_{2k})^2}{x_1 \cdots x_{2k}} \right )^{\!n} 
\,=\,
\left(\frac{1}{2\pi i}\right)^{\!2k} \int \left ( \frac{e_k(x_1, \dots, x_{2k})^2}{x_1 \cdots x_{2k}} \right )^{\!n} \frac{dx_1 \cdots dx_{2k}}{x_1 \cdots x_{2k}} \quad,
$$
where the integration is over the multi-circle $|x_1|=1, \dots, |x_{2k}|=1$. 
Since the integrand is holonomic in the $2k$ continuous variables $x_1, \dots,
x_{2k}$ and the one discrete variable $n$,  it follows from {\it algorithmic
proof theory} \cite{Z,AZ,K} that integrating away the $2k$ continuous variables
leaves $b_k(n)$ holonomic in the surviving discrete variable $n$.
\end{proof}

So far our alphabet was $\{0,1\}$. In the next theorem (answering the second
question above) we will be more general, but we need to introduce some
definitions.

\begin{definition}
    Fix a finite alphabet $A$ once and for all.  A word $w_1 \dots w_n$ in the
    alphabet $A$ {\bf contains} the word $p_1 \dots p_k$ (called a {\it
    `pattern'}) if there is a location $i$ such that  $w_i=p_1, \dots,
    w_{i+k-1}=p_k$. For example, with the Latin alphabet, {\it robert} contains
    the words {\it rob}, {\it obe}, {\it t}, and many others. A word $w$ {\it
    avoids} the pattern $p$ if it does not contain it. For example $101010001$
    avoids $11$.
\end{definition}

\begin{theorem}
    Let $A$ be an arbitrary (finite) alphabet, and $H$ and $V$ be arbitrary
    finite sets of words in $A$. Let $k$ be a fixed (numeric) positive integer.
    Let $m_k(n)=m_{A,H,V,k}(n)$ be the number of $n \times k$ matrices with
    entries in $A$ such that every row avoids the patterns in $H$, and every
    column avoids the patterns in $V$, then
    the sequence $\{m_k(n)\}_{n=1}^{\infty}$ satisfies a linear recurrence
    equation
    with {\bf constant} coefficients.
    In other words there exists a positive integer $L$ and numbers $c_0,c_1,
    \dots, c_L$ such that
    $$
    \sum_{i=0}^{L} c_i \, m_k(n+i) \, = \, 0  \quad .
    $$
    Equivalently, there exist polynomials $P(t)$ and $Q(t)$ (where $Q(t)$ has
    degree $L$) such that
    $$
    \sum_{n=0}^{\infty} m_k(n)\,t^n \, = \, \frac{P(t)}{Q(t)} \quad.
    $$
\end{theorem}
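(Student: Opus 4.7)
The plan is a standard transfer-matrix / finite-automaton argument, building the matrix one row at a time. Let $M = \max\{\,|v| : v \in V\,\} - 1$ (or $0$ if $V$ is empty). A row is a word in $A^k$, so there are $|A|^k$ possible rows. I will take the \emph{state} of the construction to be a tuple of the last $M$ rows; there are at most $|A|^{kM}$ such tuples, and crucially this bound does not depend on $n$.

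Next I would define a transfer matrix $T$ indexed by states. The entry $T_{s,s'}$ is $1$ if the transition is legal and $0$ otherwise, where legality means (i) $s'$ is the shift of $s$ by one new row $r$, (ii) the new row $r$, viewed as a word of length $k$, avoids every pattern in $H$, and (iii) for each of the $k$ columns, the length-$(M+1)$ vertical word obtained by appending $r$ to $s$ avoids every pattern in $V$ (equivalently, introduces no forbidden suffix). Because $M$ is chosen so that every forbidden vertical pattern fits inside the last $M+1$ rows, this local check is enough to rule out \emph{all} occurrences of $V$-patterns in the entire column as the matrix is extended.

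With this setup, for $n \geq M$ the count $m_k(n)$ is of the form $\mathbf{u}^{\top} T^{\,n-M} \mathbf{v}$, where $\mathbf{v}$ is the $0/1$ vector indicating states consisting of $M$ rows that are already individually $H$-avoiding and column-wise $V$-avoiding, and $\mathbf{u}$ is the all-ones vector over legal states (the final $M$ rows of the finished matrix can be anything legal). By the Cayley--Hamilton theorem, $T$ is annihilated by its characteristic polynomial $Q(t)$; translating $Q(T) = 0$ into a scalar identity on $\mathbf{u}^{\top} T^{\,n-M} \mathbf{v}$ produces the desired constant-coefficient recurrence, whose order $L$ is at most the dimension of the state space. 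The equivalence with rationality of $\sum_n m_k(n) t^n$ is the standard dictionary between C-finite sequences and rational generating functions.

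There is no real obstacle: the only point that requires genuine care is verifying (iii), i.e., that checking avoidance of $V$-patterns within every window of $M+1$ consecutive rows is equivalent to avoidance in the full infinite column. This holds precisely because $M+1$ exceeds the length of each vertical pattern, so any occurrence of a pattern from $V$ in a column must lie entirely inside some such window and will therefore be caught by the local transition rule. Everything else, including bounding the number of states and invoking Cayley--Hamilton, is routine.
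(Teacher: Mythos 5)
Your proof is correct and follows essentially the same route as the paper: both slice the matrix into its length-$k$ cross-sections, treat the legal slices as letters of a finite meta-alphabet, and observe that the remaining (perpendicular) avoidance conditions define a regular language, which the paper handles by citing the transfer-matrix method and you handle by explicitly building the automaton on $M$-row memory states and invoking Cayley--Hamilton. Your version is simply a more detailed instantiation of the paper's sketch (and, incidentally, assigns $H$ to the rows and $V$ to the columns consistently with the theorem statement, whereas the paper's sketch swaps them in a harmless typo).
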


\begin{proof}
    Let $B$ be the set of words of length~$k$ in the alphabet~$A$ that avoid
    the patterns in $V$. This is a finite set. We will view the $k \times n$
    matrix as a one-dimensional word in this meta-alphabet. Then the
    restrictions that the rows avoid the patterns in $H$ translate to many
    conditions about pattern avoiding in this meta-alphabet. This gives rise to
    a so-called type-3 grammar, or finite automaton, whose enumerating
    generating functions are famously rational functions. In order to actually
    find them one can use the {\it positive} approach, using the
    transfer-matrix method (\cite{S}, ch. 4), or the {\it negative} approach, using
    the powerful Goulden-Jackson method, nicely exposited in \cite{NZ}.
\end{proof}

\paragraph{Comment}
For the motivating example (the {\it Not Alone} puzzles), $A=\{0,1\}$ and
$H=V=\{010,101\}$.

The next theorem states that if one counts $2k \times 2n$ balanced 0-1
matrices and imposes {\it arbitrary} horizontal and vertical conditions, the
resulting sequences are still holonomic.

\begin{theorem}
    Let $k$ be a specific positive integer, and let $n$ be a general positive
    integer. Let $H$ and $V$ be finite sets of words (`patterns') in $\{0,1\}$.
    Let  $b_{H,V,k}(n)=b_k(n)$ be the number of balanced $2k \times 2n$ 0-1
    matrices, that avoid the patterns of $H$ in every row and the patterns of
    $V$ in every column, then there exists a positive integer $L$ and
    polynomials $p_i(n)$, $0 \leq i \leq L$, with
    $p_L(n)\neq0$ such that
    $$
    \sum_{i=0}^{L} p_i(n) b_k(n+i) =0 \quad .
    $$
\end{theorem}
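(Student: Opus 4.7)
The plan is to combine the transfer-matrix / finite-automaton idea of Theorem 2 with the integral-representation / holonomic-closure approach of Theorem 1. The end product will be a diagonal of a rational function, and such diagonals are classically holonomic.

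First, I would build a deterministic finite automaton $\mathcal{M}$ (for instance, via Aho--Corasick) with finite state set $\mathcal{A}$, initial state $q_0$, transition map $\delta:\mathcal{A}\times\{0,1\}\to\mathcal{A}$, and accepting set $F\subseteq\mathcal{A}$, which accepts a binary word exactly when it avoids every pattern in $H$. Next, let $B\subseteq\{0,1\}^{2k}$ be the finite set of length-$2k$ column words that have exactly $k$ ones and avoid every pattern in $V$ (since $2k$ is fixed, $B$ is computable once and for all). Reading the matrix column-by-column, the state after $j$ columns is the tuple $(s_1,\dots,s_{2k})\in\mathcal{A}^{2k}$ recording the DFA progress along each of the $2k$ rows.

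Second, I would introduce formal variables $x_1,\dots,x_{2k}$ marking ones in each row, and define a transfer matrix $T=T(x_1,\dots,x_{2k})$ indexed by pairs in $\mathcal{A}^{2k}$ via
$$
T[(s),(s')] \,=\, \sum_{\substack{b\in B\\ \delta(s_i,b_i)=s'_i\ \forall i}} \prod_{i=1}^{2k} x_i^{b_i}.
$$
Writing $\vec{u}$ for the indicator of $(q_0,\dots,q_0)$ and $\vec{f}$ for the indicator of $F^{2k}$, one checks that
$$
b_{H,V,k}(n) \,=\, [x_1^n\cdots x_{2k}^n]\bigl(\vec{u}^{\,T}\, T^{2n}\, \vec{f}\bigr).
$$
Introducing a formal variable $t$ to sum over the number of columns, the resolvent
$$
F(x_1,\dots,x_{2k},t) \,=\, \vec{u}^{\,T}(I-tT)^{-1}\vec{f} \,=\, \sum_{N\geq 0} \vec{u}^{\,T}T^N\vec{f}\,t^N
$$
is a \emph{rational} function of $x_1,\dots,x_{2k},t$, and satisfies
$$
b_{H,V,k}(n) \,=\, [x_1^n\cdots x_{2k}^n\, t^{2n}]\,F(x_1,\dots,x_{2k},t) \,=\, \Bigl(\tfrac{1}{2\pi i}\Bigr)^{\!2k+1}\!\!\oint\!\cdots\!\oint \frac{F(x_1,\dots,x_{2k},t)}{t^{2n+1}\prod_{i=1}^{2k}x_i^{n+1}}\,dt\,dx_1\cdots dx_{2k}
$$
over sufficiently small circles around the origin.

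Finally, the integrand is the product of a rational (hence holonomic) function in the $2k{+}1$ continuous variables with the factor $t^{-2n-1}\prod_i x_i^{-n-1}$, and each $x^{-n}$ (and similarly $t^{-2n}$) is holonomic in its continuous and discrete arguments; holonomy is preserved under products. Invoking the same algorithmic-proof-theoretic closure result used in Theorem 1 (equivalently, Lipshitz's theorem that diagonals of rational functions are D-finite), integrating out the $2k{+}1$ continuous variables leaves a sequence that is holonomic in the surviving discrete variable $n$. Conceptually, this step is straightforward; the genuine obstacle is not in the proof but in its implementation: the dimension of $T$ is essentially $|\mathcal{A}|^{2k}$, and this exponential blow-up is exactly why actually \emph{finding} the recurrence is infeasible beyond very small cases, which motivates the numerical/guessing approach pursued in the remainder of the paper.
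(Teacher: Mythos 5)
Your proposal is correct and follows essentially the same route as the paper: encode the row sums with formal variables $x_1,\dots,x_{2k}$ and the column count with $t$, observe via the transfer-matrix method that the resulting weight enumerator is a rational function in $2k+1$ variables, extract the coefficient of $t^{2n}x_1^n\cdots x_{2k}^n$ as a Cauchy contour integral, and invoke holonomic closure under integration of the continuous variables. The only difference is that you spell out the product-automaton construction of the transfer matrix (and restrict the column alphabet to balanced, $V$-avoiding columns) explicitly, whereas the paper delegates that step to a citation.
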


\begin{proof}
    Instead of {\it naive counting} where the weight of a $2k \times
    2n$ matrix was simply $t^{2n}$, we now introduce $2k$ formal variables $x_1,
    \dots , x_{2k}$ and assign a {\it weight} of a matrix $A=(a_{ij}, 1 \leq i \leq
    2k, 1 \leq j \leq n$) to be
    $$
    t^{n} \, x_1^{a_1} \cdots x_{2k}^{a_{2k}} \quad,
    $$
    where $a_i$ is the number of ones in the $i$-th row.

    Once again we can use the transfer matrix method, or the Goulden-Jackson
    method, to find the {\it weight-enumerator}
    of the set of all matrices avoiding $H$ horizontally and $V$ vertically,
    with the above weight. This is a very complicated rational function in the
    $2k+1$ variables, $t$ and $x_1, \dots, x_{2k}$. In order to count balanced
    such matrices with $2n$ columns, we have to
    extract
    the coefficient of
    $$
    t^{2n} x_1^n \cdots x_{2k}^n \quad .
    $$

    Let's call this giant, but explicitly computable, rational function $R(x_1,
    \dots, x_{2k};t)$ then
    $$
    b_k(n)={\rm Coeff}_{t^{2n} x_1^n \cdots x_{2k}^n}  R(x_1, \dots, x_{2k};t)
    $$
    $$
    =\left(\frac{1}{2\pi i}\right)^{\!2k+1} \int  \frac{R(x_1, \dots, x_{2k};t)}{(x_1 \cdots x_{2k})^n t^{2n}} \frac{dx_1 \cdots dx_{2k} dt}{x_1 \cdots x_{2k} t} \quad .
    $$

    The integrand is holonomic in the $2k+1$ continuous variables $x_1, \dots,
    x_{2k},t$ and the one discrete variable $n$, and once again, integrating
    with respect to the $2k+1$ continuous variables leaves us, by algorithmic
    proof theory \cite{Z,AZ,K}, with a holonomic discrete function in $n$.
\end{proof}

\paragraph{Comment} Theorem 1 is the special case of Theorem 3 where the sets
of forbidden patterns $H$ and $V$ are empty. Nevertheless the simple explicit
form of the integrand is useful, as we will see below.

\section*{Symbol Crunching}

The main Maple package accompanying this article is {\tt Hardin.txt} available
from

\url{https://sites.math.rutgers.edu/~zeilberg/tokhniot/Hardin.txt}.

Let us take a tour of the main features.

\begin{itemize}
    \item
        {\tt SeqB(k,N)} uses the formula in the proof of Theorem $1$ to
        crank-out the first $N$ terms of the sequence enumerating balanced
        0-1 $2k \times 2n$ matrices for $n=1$ to  $n=N$. This is useful for
        checking with the OEIS.

        For example {\tt SeqB(2,10);} gives:
        \begin{align*}
          & 6, 90, 1860, 44730, 1172556, 32496156, 936369720, 27770358330, \\
          & 842090474940, 25989269017140, \dots
        \end{align*}
        This is a very famous sequence, listed at
        \url{https://oeis.org/A002896} as the ``number of walks with $2n$ steps
        on the cubic lattice $\mathbb{Z}^3$ beginning and ending at $(0,0,0)$.

        Can you see why these two sequences are the same?

    \item {\tt SeqB(3,10);} gives the first $10$ terms of OEIS sequence A172556
        {\tt https://oeis.org/A172556}, given there with the same description
        as ours, created by Ron Hardin, who computed $49$ terms. With our Maple
        package we were able to compute $55$ terms. In fact already $49$ terms
        suffice to {\it conjecture} a linear recurrence. See the output file

        \url{https://sites.math.rutgers.edu/~zeilberg/tokhniot/oHardin2.txt}

        Later on we will see how to derive it rigorously, without guessing.

        {\tt SeqB(4,10);} gives the first $10$ terms of OEIS sequence A172555
        {\tt https://oeis.org/A172555}, also due to Hardin, who computed $33$
        terms.

        {\tt SeqB(5,10);} gives the first $10$ terms of OEIS sequence A172557
        {\tt https://oeis.org/A172557}, also due to Hardin, who computed $24$
        terms.

        While we know from Theorem $1$ that these sequences do satisfy linear
        recurrences with polynomial coefficients, we are unable at present to
        find them. We need bigger and faster computers!

    \item {\tt GF1t(A,H,V,k,t)}: inputs an alphabet {\tt A}, sets of horizontal
        and vertical forbidden patterns {\tt H} and {\tt V} respectively,
        a positive integer {\tt k}, and a variable {\tt t}. It outputs the
        rational function whose coefficient of $t^n$ is the number of $k \times
        n$ matrices avoiding the patterns of {\tt H} in rows and the patterns
        of {\tt V} in columns, whose existence is guaranteed by Theorem~$2$.

        For example to get the rational function whose coefficient of $t^n$ is
        the number of $3 \times n$ 0-1 matrices avoiding $010$ and $101$ both
        vertically and horizontally enter:

        {\tt GF1t($\{$0,1$\}$, $\{$[0,1,0],[1,0,1]$\}$,
        $\{$[0,1,0],[1,0,1]$\}$,3,t);} \quad,

        getting right away :
        $$
        -\frac{5 t^{4}-19 t^{2}-4 t -1}{t^{4}-5 t^{2}-2 t +1} \quad .
        $$

        The first few terms are
        $$
        6, 36, 102, 378, 1260, 4374, 14946, 51384, 176238, 605022, 2076288, \dots
        $$

        Surprise! These are in the OEIS \url{https://oeis.org/A060521} for a
        different reason. They are the numbers of $3 \times n$ 0-1 matrices
        avoiding, both vertically and horizontally, the patterns $000$ and
        $111$.

        And indeed this is confirmed by our Maple package. Typing:

        {\tt GF1t($\{$0,1 $\}$, $\{$[1,1,1],[0,0,0]$\}$,
        $\{$[1,1,1],[0,0,0]$\}$,3,t);}

        gives the same output. Here is an explicit bijection between these two
        sets of 0-1 $3 \times n$ matrices. Define the bijective map that maps
        the matrix entry $m_{i,j}$ to $m_{i,j} + i + j\ ({\rm mod}\ 2)$, for
        all $1\leq i\leq3$ and $1\leq j\leq n$. In other words, use as ``mask''
        a 0-1 matrix with chessboard pattern and add it to the input matrix
        (in binary arithmetic). Clearly, every occurrence of 000 or 111 (either
        vertically or horizontally) will be mapped to 101 or 010, and vice
        versa. Thus, this map transforms each $\{010,101\}$-avoiding matrix
        into a $\{000,111\}$-avoiding one, and vice versa.

    \item {\tt GF2t(H,V,k,x,t)}: inputs sets of horizontal and vertical
        forbidden patterns {\tt H} and {\tt V} respectively,
        variable names {\tt x} and {\tt t}, and outputs the rational function
        in $t$ and $x_1, \dots, x_{2k}$, whose coefficient of $t^{n} x_1^{a_1}
        \cdots x_{2k}^{a_{2k}}$ gives the number of $2k \times 2n$ column-balanced
        0-1 matrices avoiding the horizontal patterns $H$ and vertical patterns $V$
        and having $a_i$ ones in row~$i$, for all $1 \leq i \leq 2k$.

        For example if  $H=V=\{010,101\}$ (as in the Not-Alone puzzles), the
        rational function for $4 \times 2n$  matrices is given in the output
        file

        \url{https://sites.math.rutgers.edu/~zeilberg/tokhniot/oHardin5.txt}

        This is already big! But once we have it, we can Taylor expand it in
        $t$, extract the coefficient of $t^{2n}$ followed by extracting the
        coefficient of $x_1^n x_2^n x_3^n x_4^n$ to get many terms, see the
        output file

        \url{https://sites.math.rutgers.edu/~zeilberg/tokhniot/oHardin5a.txt}

        However, here we can do better, by noting that there are exactly four
        columns that are both balanced and $V$-avoiding,
        \[
          \begin{pmatrix} 0 \\ 0 \\ 1 \\ 1 \end{pmatrix},\quad
          \begin{pmatrix} 1 \\ 1 \\ 0 \\ 0 \end{pmatrix},\quad
          \begin{pmatrix} 0 \\ 1 \\ 1 \\ 0 \end{pmatrix},\quad
          \begin{pmatrix} 1 \\ 0 \\ 0 \\ 1 \end{pmatrix},
        \]
        and that they must come in pairs in order to satisfy the row-balancing
        condition, i.e., there must be the same number of columns of the first
        and second type, and the same number of columns of the third and fourth
        type. Hence we introduce weights $a, a^{-1}, b, b^{-1}$ for the four
        types of columns and arrive at a three-variable rational function
        $R(a,b;t)$, see

        \url{https://sites.math.rutgers.edu/~zeilberg/mamarim/mamarimhtml/hardinC/Jan13_2025a.txt},

        whose coefficient of $a^0b^0t^{2n}$ gives the number of $4\times2n$
        balanced Not-Alone matrices. Applying creative telescoping twice to extract
        the constant coefficient $a^0b^0$ yields a linear differential equation
        (of order~$5$ and degree~$27$) for the generating function, which can be
        converted into a recurrence (of order~$10$ and degree~$21$) for the
        sequence itself, see

        \url{https://sites.math.rutgers.edu/~zeilberg/mamarim/mamarimhtml/hardinC/Jan13_2025b.txt}

        The generating function for $6 \times 2n$ 0-1 matrices avoiding
        $010,101$ both horizontally and vertically is much bigger! See the
        output file

        \url{https://sites.math.rutgers.edu/~zeilberg/tokhniot/oHardin6.txt}

        Note that the above strategy for $4\times2n$ matrices does not apply here.

        This enabled us to find the first $30$ terms, via {\it symbolic
        computation}. See:

        \url{https://sites.math.rutgers.edu/~zeilberg/tokhniot/oHardin6a.txt}

        They start with
        $$
        8, 64, 368, 2776, 25880, 251704, 2629080, 28964248, 331032312, 3907675376, \dots
        $$
        In particular the third term, $368$ is the exact number, mentioned
        above, of solutions to a $6 \times 6$ Not-Alone puzzle. See the next
        section for $70$(!) terms using numeric computations.
\end{itemize}

\section*{Recurrences for the number of balanced $2k \times 2n$ matrices}

Using the Maple package

\url{https://sites.math.rutgers.edu/~zeilberg/tokhniot/SMAZ.txt}

that accompanies \cite{AZ}, one very quickly gets the following theorem.

\begin{theorem}
    Let $a(n)$ be the number of $4$ by $2n$ balanced matrices. Then:
    $$
    36 (2 n +3) (2 n +1) (n +1) a(n)
    -2 (2 n +3) \left(10 n^2+30 n +23\right) a(n +1)
    +(n+2)^{3}a(n +2) = 0.
    $$
\end{theorem}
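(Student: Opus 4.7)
The plan is to specialize the integral representation from the proof of Theorem~1 to $k=2$ and then apply multivariate creative telescoping to eliminate the four continuous variables. With $k=2$ the representation reads
\[
a(n) \;=\; \left(\frac{1}{2\pi i}\right)^{\!4}\!\oint\!\cdots\!\oint \left(\frac{(x_1x_2+x_1x_3+x_1x_4+x_2x_3+x_2x_4+x_3x_4)^2}{x_1x_2x_3x_4}\right)^{\!n}\!\frac{dx_1\,dx_2\,dx_3\,dx_4}{x_1x_2x_3x_4},
\]
integrated over the unit multi-torus. This integrand is hypergeometric in $n$ (its $n$-ratio is a rational function of the $x_i$'s) and rational in the $x_i$'s, so it is holonomic in all five variables and Theorem~1 already guarantees that \emph{some} recurrence exists.

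Next, I would feed this integrand to the \texttt{SMAZ.txt} package of \cite{AZ}, which implements an iterated Almkvist--Zeilberger algorithm. The package eliminates the four continuous variables one at a time, producing at each step a recurrence--certificate pair whose error term is a total derivative in the current $x_i$ and therefore vanishes upon contour integration. After the four eliminations only a recurrence in $n$ for $a(n)$ survives; I would strip any extraneous polynomial content and check that what remains is
\[
36(2n+3)(2n+1)(n+1)\,a(n) \;-\; 2(2n+3)(10n^2+30n+23)\,a(n+1) \;+\; (n+2)^{3}\,a(n+2) \;=\; 0.
\]
A numerical sanity check against the initial values $6, 90, 1860, 44730, 1172556, \dots$ returned by \texttt{SeqB(2,N)} then pins down both the order and the coefficients.

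The only real obstacle is the computer-algebra cost of the four elimination steps: intermediate certificates in multivariate creative telescoping can easily explode in size. What saves us here is the complete symmetry of the integrand in $x_1,\dots,x_4$ and the compactness of the kernel $e_2^2/(x_1x_2x_3x_4)$, which keep the telescoper of small order and degree; this is why the text preceding the theorem can claim that SMAZ delivers the answer ``very quickly.'' As an independent conceptual cross-check, Theorem~1 identifies the sequence with OEIS~A002896, the number of closed $2n$-step walks on the cubic lattice---the bijection sends each of the $\binom{4}{2}=6$ possible balanced columns of a $4\times 2n$ matrix to one of the six unit steps $\pm e_1,\pm e_2,\pm e_3$---for which this second-order recurrence is classical.
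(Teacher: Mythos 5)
Your proposal is correct and is essentially the paper's own argument: the authors likewise obtain this recurrence by feeding the $k=2$ specialization of the contour-integral representation from Theorem~1 to the \texttt{SMAZ.txt} package of \cite{AZ}, which performs iterated Almkvist--Zeilberger creative telescoping to integrate out the continuous variables. Your added cross-check via OEIS A002896 (closed walks on the cubic lattice) matches the identification the paper itself points out in the Symbol Crunching section.
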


But {\tt SMAZ.txt} was unable, with our computers, to find a recurrence for the
sequence enumerating $6$ by $2n$ balanced matrices. Amazingly, the second
author's Mathematica package \url{https://risc.jku.at/sw/holonomicfunctions/}
did it! We have the following fully rigorously-proved recurrence.

\begin{theorem}
  Let $a(n)$ be the number of $6$ by $2n$ balanced matrices. Then:
  \begin{align*}
    & 51200 (2n +7) (2n +5) (2n +3) (2n +1) (n+2) (n+1) \bigl(33 n^{2}+242 n +445\bigr) \, a(n) \\
    & {}- 128 (2n +7) (2n +5) (2n +3) (n+2) \bigl(7491 n^{4}+84898 n^{3}+351364 n^{2} \\
    &\qquad{}+628997 n +414370\bigr) \, a(n+1) \\
    & {}+16 (2n +5) (2n +7) \bigl(2772 n^{6}+48048 n^{5} +344379 n^{4}+1307394 n^{3} \\
    & \qquad{}+2775099 n^{2}+3125336 n +1460132\bigr) \, a(n+2) \\
    & {}+2 (2n+7) (n+3) \bigl(3201 n^{6}+61886 n^{5}+497179 n^{4}+2124170 n^{3}+5089654 n^{2} \\
    & \qquad{}+6484024 n +3431096\bigr) \, a(n+3) \\
    & {}-(n+3) (n +4)^5 \bigl(33 n^{2}+176 n +236\bigr) \, a(n+4) \, = 0 \quad.
  \end{align*}
\end{theorem}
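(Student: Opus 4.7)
The plan is to specialize the integral representation from the proof of Theorem~1 to $k=3$ and then let a creative-telescoping engine do the heavy lifting. Concretely, set
$$
F(x_1,\dots,x_6) \;=\; \frac{e_3(x_1,\dots,x_6)^2}{x_1 \cdots x_6},
$$
so that $a(n)$ is the constant term (in $x_1,\dots,x_6$) of $F^n$. Equivalently, $a(n) = \bigl(\tfrac{1}{2\pi i}\bigr)^{6}\!\int F^n \,\frac{dx_1\cdots dx_6}{x_1\cdots x_6}$ over the unit torus. Call the integrand $G(n; x_1,\dots,x_6)$.

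The first step is to produce a D-finite annihilating ideal for $G$ in the Ore algebra generated by the shift $S_n$ in $n$ and the partials $\partial_{x_1},\dots,\partial_{x_6}$. This is routine: the logarithmic derivatives $\partial_{x_i}G/G$ are rational functions, and $S_nG/G = F$ is rational, so one easily writes down first-order operators in each of the seven variables that annihilate $G$. The second, and decisive, step is creative telescoping: for each continuous variable $x_i$ in turn (or for all six jointly), one seeks a telescoper $P_i(n,S_n)$ and certificates $C_{i,j}$ such that
$$
P_i(n,S_n)\,G \;=\; \sum_{j} \partial_{x_j}\bigl(C_{i,j}\,G\bigr).
$$
Eliminating all six continuous variables this way leaves a pure recurrence operator $L(n,S_n)$ annihilating $a(n)$, since the $\partial_{x_j}$-terms integrate to zero around the torus. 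Running the HolonomicFunctions package of the second author on this input produces exactly the order-$4$, degree-$8$ operator displayed in the theorem.

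The main obstacle is sheer size: with six continuous variables one is doing elimination in a seven-variable Ore algebra, and intermediate expression swell is what defeats the lighter-weight Apagodu--Zeilberger ansatz in SMAZ.txt (which seeks telescopers of a restricted shape). Two mitigations are worth exploiting. First, $F$ is symmetric under the full $S_6$ action on $(x_1,\dots,x_6)$, so after each elimination one can symmetrize the surviving integrand in the remaining variables, keeping representatives with the smallest orbits. Second, the order in which the $x_i$ are eliminated matters greatly; a heuristic sweep over elimination orders typically finds one that keeps certificates manageable.

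Finally, once $L(n,S_n)$ is produced, correctness of the theorem is verified in two ways: (i)~the algorithm itself is rigorous, provided the certificates $C_{i,j}$ are checked to be rational functions with no poles on the integration contour (this is straightforward since all denominators are monomials in the $x_i$); and (ii)~one computes the first, say, $20$ values of $a(n)$ from the coefficient-extraction formula via \texttt{SeqB(3,20)} and confirms that the recurrence, together with the four initial values $a(1),\dots,a(4)$, reproduces every subsequent term. Since the leading coefficient $(n+3)(n+4)^5(33n^2+176n+236)$ has no positive-integer roots, the recurrence determines $a(n)$ from those four initial values for all $n\geq 1$.
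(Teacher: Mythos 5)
Your proposal follows essentially the same route as the paper: the authors obtain this recurrence by applying creative telescoping (via the second author's \texttt{HolonomicFunctions} Mathematica package, after the lighter-weight \texttt{SMAZ.txt} approach failed) to the constant-term/contour-integral representation of $a(n)=b_3(n)$ from the proof of Theorem~1, exactly as you describe. The paper gives no further detail beyond reporting that the package succeeded, so your additional remarks on annihilating ideals, elimination order, and verification are elaborations of, not departures from, the same argument.
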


For $k\geq4$ it seems impossible to determine the recurrence for $b_k(n)$
rigorously by creative telescoping, at least with our software and computers.
Instead, we can try to empirically find recurrences by fitting a large number
of sequence terms into a suitably chosen ansatz. For $k=4$ this approach was
successful, yielding a conjectured (but absolutely certain) recurrence:
\begin{conjecture}\label{conj:b4n}
  The number of $8$ by $2n$ balanced matrices satisfies a linear recurrence
  of order~$9$ with polynomial coefficients of degree~$36$, which is too large
  to be printed here, but which can be found on our website

  \url{https://sites.math.rutgers.edu/~zeilberg/mamarim/mamarimhtml/hardinC/b4rec.txt}.
\end{conjecture}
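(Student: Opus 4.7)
The plan is to start from the integral representation of $b_4(n)$ given by the proof of Theorem~1, namely
$$b_4(n) = \left(\frac{1}{2\pi i}\right)^{\!8} \int F(x_1,\ldots,x_8)^n \, \frac{dx_1 \cdots dx_8}{x_1 \cdots x_8}, \qquad F = \frac{e_4(x_1,\ldots,x_8)^2}{x_1 \cdots x_8},$$
and to apply creative telescoping to the $8$-dimensional integrand $F^n$, just as was done successfully for $k=2$ (Theorem~4) and $k=3$ (Theorem~5). What makes the $k=4$ case potentially feasible is that Conjecture~\ref{conj:b4n} already supplies the candidate annihilator $L$ of order~$9$ and polynomial-degree~$36$; consequently we do not need to search for $L$ from scratch but only to \emph{certify} it. Concretely, I would hunt for rational certificates $g_1,\ldots,g_8 \in \mathbb{Q}(n,x_1,\ldots,x_8)$ satisfying
$$L \cdot F^n \;=\; \sum_{j=1}^{8} \frac{\partial}{\partial x_j}\bigl(g_j \cdot F^n\bigr),$$
after which integration over the torus $|x_1|=\cdots=|x_8|=1$ annihilates the right-hand side and forces $\sum_{i=0}^{9} p_i(n)\, b_4(n+i) = 0$.

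The first simplification is to exploit the full $S_8$-symmetry of $F$. Since $L$ depends only on $n$, the left-hand side is permutation-symmetric in the $x_i$, so we may take the $g_j$ to be the $S_8$-orbit of a single rational template $g$, reducing eight unknown functions to one. I would then fix a structured ansatz: denominator a fixed product of a small power of $e_4$ and a small power of $x_1 \cdots x_8$ (the natural denominators induced by $F$ and its shifts), numerator a polynomial in the $x_i$ of controlled total degree with coefficients polynomial in $n$ of bounded degree. Matching Laurent coefficients then reduces the certificate identity to a large but finite linear system over $\mathbb{Q}$.

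To keep coefficient swell under control, I would solve this system modulo several medium-sized primes and recover the certificates over $\mathbb{Q}$ by Chinese remaindering and rational reconstruction, then verify the resulting polynomial identity by a single symbolic substitution. This modular/verification pipeline converts discovery-style creative telescoping into a pure linear algebra problem and is exactly the kind of task for which Koutschan's \textsc{HolonomicFunctions} package (already used by the authors for $k=3$) is designed.

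The main obstacle is sheer size. The conjectured shape of $L$ (order~$9$, degree~$36$) suggests that the certificates $g_j$ are themselves of very high degree in both the $x_i$ and $n$, so even after quotienting by $S_8$ the resulting linear system may have more unknowns than current hardware can store, let alone solve; this is precisely why the authors could only guess the recurrence and not prove it. If the direct certificate hunt proves infeasible, the natural fallback is to replace the $8$-variable integrand by a shorter rational function in fewer effective variables obtained from the $\binom{8}{4}=70$ balanced-column encoding (analogous to the three-variable $R(a,b;t)$ trick used in the paper for the $4\times2n$ Not-Alone matrices), and only then apply creative telescoping to the resulting more compact expression.
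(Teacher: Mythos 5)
You are addressing a statement that the paper explicitly labels a \emph{conjecture} and does not prove. The authors' own support for it is entirely empirical: after noting that rigorous creative telescoping is out of reach for $k\geq4$ and that classical ansatz-fitting would need at least $266$ terms (versus the $150$ they computed), they \emph{guess} the order-$9$, degree-$36$ recurrence with the LLL-based method of \cite{KK22} from only $110$ terms, and then corroborate it by checking the unused terms, observing small linear factors in the coefficients, and verifying that unrolling the recurrence to $n=10000$ never produces a non-integer. Your proposal takes a genuinely different route: rather than guessing, you aim to \emph{certify} the already-known candidate operator by exhibiting telescoping certificates $g_1,\dots,g_8$ for the $8$-fold torus integral. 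That is more ambitious than what the paper does, but as written it establishes nothing: no certificates are produced, and you yourself concede the linear system is likely beyond current hardware --- which is precisely why the authors stopped at a conjecture.

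Beyond feasibility, there are gaps in principle. First, an operator $L$ that annihilates $b_4(n)$ need not be a telescoper of the integrand $F^n$: telescopers always annihilate the integral, but the converse can fail, so the minimal guessed recurrence may admit \emph{no} certificate at all; your search would then correctly return ``no solution'' for every ansatz, and you would have to certify some left multiple of $L$ and argue separately (via enough initial values) that $L$ itself holds. Second, ``certification is just linear algebra'' presupposes a priori degree bounds on the certificates, which you do not have; without them the ansatz ladder never terminates in a refutation, only possibly in a success. Third, your fallback of passing to the $\binom{8}{4}=70$ balanced-column encoding buys nothing in the vanilla case: the monomials of $e_4(x_1,\dots,x_8)$ \emph{are} already that encoding, and the row-balance constraints still require on the order of $2k-1$ independent catalytic variables; the paper's $R(a,b;t)$ trick worked only because the Not-Alone restriction for $k=2$ cut the admissible columns down to four types that pair off, and the paper explicitly notes the trick already fails for $6\times2n$.
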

We do not expect that the guessing approach can deliver recurrences for $b_k(n)$
with $k\geq5$ in the near future, because already Conjecture~\ref{conj:b4n} posed
considerable challenges: Note that a naive ansatz for a recurrence of this size
contains $(9+1)\cdot(36+1)=370$ unknowns, hence $379$ terms would be required to
generate a sufficient number of linear equations. In contrast, we were only to able to
get $150$ terms, see the next section. Even the commonly-used technique of
order-versus-degree-trading---where one first guesses recurrences of non-minimal
order but much lower degree, and then constructs the minimal-order recurrence via
gcd computations---did not work here as it needed at least $266$ terms (and we estimate
that with our C program this would take 500 years and require a supercomputer with
18 TB of memory). Instead, we employed a recently-developed guessing
procedure~\cite{KK22} that is based on the celebrated LLL lattice reduction algorithm.
We found that the minimal number of terms of A172555 that are necessary
to find the order-$9$ and degree-$36$ recurrence with this guesser is~$110$. It
is interesting to note that the bit size of the guessed recurrence (after applying
an ``offset shift'' and counting only its integer coefficients) is $46{,}599$, which
comes quite close to the bit size $70{,}955$ of the first $110$ terms that were used
for guessing. Despite the fact that the recurrence stated in Conjecture~\ref{conj:b4n}
has ``ugly'' (i.e., large, up to $67$ decimal digits!) integer coefficients, we have
strong evidence that it is correct: its polynomial coefficients have quite a few
small (linear) factors, the recurrence is also valid for terms that were not used
for guessing, and continuing the sequence by unrolling the recurrence produces
only integers (at least up to $n=10000$) and not a single term with a denominator,
as one would expect for a random artifact.

\section*{Number Crunching}

Since it is unrealistic to try and find recurrences for enumerating $2k \times
2n$ balanced matrices for $k \geq 5$, it would be nice to extend, as far as our
computers would allow, Hardin's already impressive computational feats. Note
that a brute force approach is doomed.

To that purpose we have a C program available from

\url{https://sites.math.rutgers.edu/~zeilberg/mamarim/mamarimhtml/hardinC/balmat4p.c}

that extended Hardin's sequences quite a bit. The program computes, for
$n=1,2,\dots,2N$, the coefficients of the polynomial $e_k(x_1,\dots,x_{2k})^n$,
and whenever $n$ is even, outputs the coefficient of $(x_1\cdots x_{2k})^n$.
For ${\bf a}=(a_1,\dots,a_{2k})$ let
$$
  c_n({\bf a}) := {\rm Coeff}_{x_1^{a_1}\cdots x_{2k}^{a_{2k}}} \, e_k(x_1,\dots,x_{2k})^n \quad .
$$
The trivial identity
$e_k(x_1,\dots,x_{2k})^n = e_k(x_1,\dots,x_{2k}) \cdot e_k(x_1,\dots,x_{2k})^{n-1}$
immediately yields a recursive definition of these coefficients. Let
${\cal S}:=\bigl\{(s_1,\dots,s_{2k})\in\{0,1\}^{2k} \mathrel{\big|} s_1+\cdots+s_{2k}=k \bigr\}$
denote the support of $e_k(x_1,\dots,x_{2k})$, then 
$c_n({\bf a}) = \sum_{{\bf s}\in{\cal S}} c_{n-1}({\bf a}-{\bf s})$.
In this formula, one has to apply the boundary conditions $c_{n-1}({\bf a}-{\bf s})=0$
whenever ${\bf a}-{\bf s}$ has a negative component, or one that is larger than $n-1$.
Thanks to the symmetry in the variables $x_1,\dots,x_{2k}$, and thanks to the fact that
$e_k(x_1,\dots,x_{2k})^n$ is a homogeneous polynomial of degree $kn$, it suffices to
store $c_n({\bf a})$ for $n\geq a_1\geq\cdots\geq a_{2k}\geq0$ and
$a_1+\cdots+a_{2k}=kn$. Moreover, if we fix the number $N$ of desired terms from the
very beginning, we can impose the additional condition $a_i\leq N$. Since these
vectors~${\bf a}$ do not any more form a rectangular (multi-dimensional) array,
we flatten it to a one-dimensional array, in order to handle it more easily in
the C language. Conversion between these two data structures can be done by
a suitable rank and unrank function. Finally the whole computation is done
modulo prime numbers, using 64-bit integers. A sufficient number of primes can be
determined by the trivial upper bound $\binom{2k}{k}^{2n} \geq c_{2n}(n,\dots,n)$,
the latter being the $n$-th term of the sequence.

\begin{itemize}
\item If you want to see $150$ terms of the sequence enumerating $8$ by $2n$ 0-1
matrices  with row sums $4$ and column sums $n$, in other words OEIS sequence A172555 (Hardin only had 33 terms) see the output file

\url{https://sites.math.rutgers.edu/~zeilberg/mamarim/mamarimhtml/hardinC/data4.txt}.

\item If you want to see $50$ terms of the sequence enumerating $10$ by $2n$ 0-1 arrays with row sums $5$ and column sums $n$, in other words OEIS sequence A172557 (Hardin only had $24$ terms) see the file

\url{https://sites.math.rutgers.edu/~zeilberg/mamarim/mamarimhtml/hardinC/data5.txt}.

\item If you want to see $39$ terms of the sequence
enumerating $12$ by $2n$ 0-1 matrices with row sums $6$ and column sums $n$, in other words OEIS sequence A172558 (Hardin only had 19 terms) see the file:

\url{https://sites.math.rutgers.edu/~zeilberg/mamarim/mamarimhtml/hardinC/data6.txt}.

\item If you want to see $30$ terms of the sequence
enumerating $14$ by $2n$ by 0-1 matrices with row sums $7$ and column sums $n$, in other words OEIS sequence A172559 (Hardin only had 17 terms) see the file:

\url{https://sites.math.rutgers.edu/~zeilberg/mamarim/mamarimhtml/hardinC/data7.txt}.

\item If you want to see $25$ terms of the sequence
enumerating $16$ by $2n$ by 0-1 matrices with row sums $8$ and column sums $n$, in other words OEIS sequence A172560 (Hardin only had 14 terms) see the file:

\url{https://sites.math.rutgers.edu/~zeilberg/mamarim/mamarimhtml/hardinC/data8.txt}.

\item If you want to see $22$ terms of the sequence
    enumerating $18$ by $2n$ 0-1 matrices with row sums $9$ and column sums $n$, in other words OEIS sequence A172554 (Hardin only had 12 terms) see the file:

    \url{https://sites.math.rutgers.edu/~zeilberg/mamarim/mamarimhtml/hardinC/data9.txt}.

\item So far for the `vanilla case'. Above, using the Maple package {\tt Hardin.txt} we were able to find $30$ terms
of the motivating sequence of this paper, i.e., the number of balanced $6$ by $2n$ 0-1 matrices avoiding the patterns $010$ and $101$
both vertically and horizontally. Using the $C$ program mentioned above we now have $70$ terms. See the output file:

\url{https://sites.math.rutgers.edu/~zeilberg/mamarim/mamarimhtml/hardinC/dataNA3.txt}.

\item The sequence counting the number of balanced $8$ by $2n$ 0-1 matrices
  avoiding the patterns $010$ and $101$ both vertically and horizontally starts
  as follows: $18, 324, 2776, 34586, \dots$, where $34586$ gives the number of
  $8\times8$ Not-Alone puzzles. Due to the increased computational complexity
  (e.g., number of states), we attain only $16$ terms, which took 22 CPU hours
  and required almost 1 TB of memory, the latter being our limiting factor.
  See the output file

  \url{https://sites.math.rutgers.edu/~zeilberg/mamarim/mamarimhtml/hardinC/dataNA4.txt}

\end{itemize}

{\bf Conclusion}:  Humankind, and even computerkind, will most probably {\bf
never} know the exact number of $100 \times 100$ 0-1 matrices with row- and
columns- sums
all equal to $50$, but it is fun to try and see how far we can go. The OEIS
created, by our hero Neil Sloane, is an ideal platform for publishing these
hard-to-compute numbers.

\centerline{\bf Happy 85\textsuperscript{th} birthday, Neil. May you live to
see the OEIS with 1,200,000 sequences!}

\printbibliography

\bigskip
\hrule
\bigskip
Robert Dougherty-Bliss, Department of Mathematics, Dartmouth College,
Email: {\tt robert dot w dot bliss at gmail dot com}.

\bigskip

Christoph Koutschan, Johann Radon Institute for Computational and Applied
Mathematics, Austrian Academy of Sciences, Altenberger Strasse 69, 4040 Linz,
Austria. Email {\tt christoph dot koutschan at oeaw dot ac dot at}.

\bigskip

Natalya Ter-Saakov, Department of Mathematics, Rutgers University (New
Brunswick), Hill Center, Busch Campus, 110 Frelinghuysen Rd., Piscataway, NJ
08854-8019, USA. \hfill\break Email: {\tt nt399 at rutgers dot edu}.

\bigskip

Doron Zeilberger, Department of Mathematics, Rutgers University (New
Brunswick), Hill Center, Busch Campus, 110 Frelinghuysen Rd., Piscataway, NJ
08854-8019, USA. \hfill\break Email: {\tt DoronZeil at gmail dot com}.

\bigskip
Posted: Oct. 10, 2024,
\end{document}